\newcommand{\be}[1]{\begin{eqnarray#1}}
\newcommand{\ee}[1]{\end{eqnarray#1}}
\newtheorem{thm}{Theorem}[section]
\newtheorem{propn}[thm]{Proposition}
\newtheorem{lemma}[thm]{Lemma}
\theoremstyle{remark}
\newtheorem{remark}[thm]{Remark}
\newtheorem{definition}[thm]{Definition}
\newcommand{\tp}{\otimes}
\newcommand{\braid}[2]{{#1}$\lower4pt\hbox{$\tp\atop\raise4pt
            \hbox{$\scriptscriptstyle\Ru $}$}${#2}}
\newcommand{\twist}[2]{{#1}${\,\scriptscriptstyle \Ru}\atop\raise9pt\hbox{$\scriptstyle\tp$} ${#2}}
\newcommand{\twistF}[2]{{#1}${\,\scriptscriptstyle \F}\atop\raise9pt\hbox{$\scriptstyle\tp$} ${#2}}
\newcommand{\Bc}{\mathcal{B}}
\newcommand{\Ac}{\mathcal{A}}
\newcommand{\Kc}{\mathcal{K}}
\newcommand{\op}{\oplus}
\newcommand{\A}{\mathcal{A}}
\newcommand{\Ru}{\mathcal{R}}
\newcommand{\F}{\mathcal{F}}
\newcommand{\s}{\mathfrak{s}}
\newcommand{\C}{\mathbb{C}}
\newcommand{\Z}{\mathbb{Z}}
\newcommand{\N}{\mathbb{N}}
\newcommand{\g}{\mathfrak{g}}
\renewcommand{\l}{\mathfrak{l}}
\newcommand{\p}{\mathfrak{p}}
\newcommand{\la}{\lambda}
\newcommand{\n}{\nonumber          }
\newcommand{\al}{\alpha}
\newcommand{\bt}{\beta}
\newcommand{\dt}{\delta}
\newcommand{\id}{\mbox{id}}
\newcommand{\End}{\mathrm{End}}
\newcommand{\Span}{\mathrm{Span}}
\newcommand{\si}{\sigma}
\begin{document}
\title{Solutions to graded reflection equation of $GL$-type}
\author{D. Algethami${}^{\dag,\ddag}$, A. Mudrov${}^{\dag,\sharp}$, V. Stukopin${}^\sharp$}
\address{${}^{\dag}$ University of Leicester,
 University Road,
LE1 7RH Leicester, UK,}
\address{
 ${\ddag}$ University of Bisha,
 255, Al Nakhil,
67714 Bisha, Saudi Arabia,
}{
\address{
${}^{\sharp}$ MIPT,
9 Institutskiy per., Dolgoprudny, Moscow Region,
141701, Russia}
\email{daaa3@leicester.ac.uk, am405@le.ac.uk, stukopin.va@mipt.ru}
\maketitle
\begin{abstract}
We list solutions of the graded reflection
equation associated with the fundamental vector representation
of a quantum supergroup of $GL$-type.
\end{abstract}
{\small \underline{Key words}: Quantum general linear supergroup, Graded reflection equation, Baxterization}\\

{\small \underline{Mathematics Subject Classification}: 17B37, 81R50}
\section{Introduction}
There are two fundamental equations in the theory of  quantum integrable models.
The first one is the Yang-Baxter equation (YBE) which
is a key ingredient of quantum inverse scattering method; it  gave rise to quantum groups \cite{FRT,Dr}. The other is
Reflection equation (RE) also known as the boundary YBE \cite{KSkl,KS,KSS}. It appears in  models with open boundary conditions \cite{Cher,Skl} and is  related with coideal subalgebras and quantum homogeneous spaces \cite{Dij,NDS,N,M1}. The theory of quantum symmetric pairs is developed in great detail in \cite{Let,K} and
the corresponding universal   solutions to RE have been constructed in \cite{BK,RV}.

Reflection equation associated with infinite dimensional quantum groups lead to solutions (K-matrices) depending on spectral parameter,
in contrast with constant solutions, which are related to finite dimensional quantum groups. Classification of K-matrices
is a fundamental problem, which is completely solved to date only for $\g\l(N)$:  for the constant case in \cite{M1} and for
those (invertible) with spectral parameter  in \cite{RV1}. Although the constant version of RE is of less demand in applications to physics, it plays a role in such a classification
thanks to a baxterization procedure \cite{KM,IO}. It has been proved in \cite{RV1} that every invertible K-matrix of the affine quantum group $U_q\bigl(\widehat{\g\l(N)}\bigr)$ can be
obtained from a baxterized constant K-matrix of  $U_q\bigl(\g\l(N)\bigr)$  via a family of transformations called equivalence. Thus \cite{RV1} is not just listing K-matrices
with spectral parameter but clarifies their structure.
In this context,  classification of constant K-matrices becomes an important first step.

A supersymmetric  version of  RE was addressed in the literature with  a focus on affine quantum supergroups and super Yangians \cite{AACLFR,AACLFR1,DK,L,R}
and for  special gradings of the
underlying vector space. It
is therefore interesting to obtain a classification of constant solutions to RE for an arbitrary grading, as a first step
of the classification programme of \cite{RV1}.
We do this for the general linear  quantum supergroups.

We give a complete classification of the solutions to RE of the aforementioned type. Our method is motivated by an earlier  result of \cite{M1} for the non-graded case.
It turns out that
solutions of the graded RE are exactly even matrices that solve the non-graded RE. When it comes to non-degenerate K-matrices, solutions exist
only for a special "symmetric" grading that is preserved by  the longest element of the symmetric group acting on a weight basis of the defining representation.

In the next section, we give basics of the graded RE and prove its equivalence to a non-graded RE with an appropriate
R-matrix. For even solutions, that is exactly the R-matrix satisfying the  equivalent non-graded YBE equations.
We use that correspondence and reduce the study to a non-graded RE. That allowed us to adapt the reasoning
 of \cite{M1} to the current exposition. We closely follow \cite{M1} making appropriate modifications as per grading,
 along with some additions and clarifications. The main result is stated in Section \ref{Sec_Classification}.
The proof is arranged in a sequence of 9 lemmas in Section \ref{Sec_Proof}.
Baxterization of constant solutions to spectral parameter dependent RE solutions is considered in Section \ref{Sec_Baxt}.
\section{$\Z_2$-graded Yang-Baxter and Reflection equations}
Let us recall basic definitions of graded algebras and modules.

An associative algebra $\Ac$ is called $\Z_2$-graded if it is presentable as sum of vector spaces $\Ac=\Ac_0\oplus \Ac_1$ satisfying $\Ac_i\Ac_j\subset \Ac_{i+j\mod 2}$.
In particular,  $\Ac_0$ is a subalgebra and $\Ac_1$ is a left/right $\Ac_0$-module. Elements of $\Ac_0$ are called even and elements of $\Ac_1$ are called odd.
An $\Ac$-module $V$ is called graded if it decomposes to a direct sum $V=V_0\op V_1$ such that $\Ac_i V_j\subset V_{i+j\mod 2}$.

An example is a complex vector space $V=\C^N$ with a basis $\{v_i\}_{i=1}^N$ equipped with an arbitrary parity function $\{1,\ldots,N\}\to \{0,1\}$,
$i\mapsto [i]=\deg(v_i)$.
Then $V_i=\Span\{e_k\}_{[k]=i}$, $i=0,1$. It is a graded module over the algebra $\Ac=\End(V)$ whose graded components
are set to be
$\Ac_i=\Span\{e_{lk}\}_{[l]+[k]=i\mod 2}$, $i=0,1$. Here $e_{lk}$ are the matrix units acting on the basis vectors by  $e_{lk}v_j=\dt_{kj}v_l$.

Given two  graded algebras $\Ac$ and $\Bc$  their tensor product $\A\tp \Bc$ is a graded algebra too.
The multiplication on homogeneous elements is defined by the rule
$$
(a_1\tp b_1)(a_2\tp b_2)=(-1)^{\deg(a_2)\deg(b_1)}a_1 a_2\tp b_1b_2.
$$
One has $(\Ac\tp \Bc)_0=(\Ac_0\tp \Bc_0)\op(\Ac_1\tp \Bc_1)$ and $(\Ac\tp \Bc)_1=(\Ac_1\tp \Bc_0)\op(\Ac_0\tp \Bc_1)$.

In the special case when $\Ac=\Bc=\End(V)$, there is a $\Z_2$-graded isomorphism
\begin{equation}
\varphi\colon \End(V)\tp \End(V)\to \End(V\tp V), \quad e_{ij}\tp e_{lk}\mapsto (-1)^{[j]([l]+[k])} e_{il,jk},
\label{iso-phi}
\end{equation}
where $e_{il,jk}\in \End(V\tp V)$ are matrix units acting by  $e_{il,jk}(v_{m}\tp v_n)=\dt_{lm}\dt_{kn}v_{i}\tp v_j$.
Given $F=\sum_{ijlk}F_{ij,lk}e_{ij}\tp e_{lk}\in \End(V)\tp \End(V)$ we will write $\tilde F=\sum_{ijlk}\tilde F^{il}_{jk}e_{il,jk} \in \End(V\tp V)$ for its $\varphi$-image.
The entries are related by $\tilde F^{il}_{jk}=(-1)^{[j]([l]+[k])}F_{ij,lk}$.

Suppose $V$ is a graded vector space and $\Ac=\End(V)$ is the corresponding graded matrix algebra.
An invertible element $R\in \Ac\tp \Ac$ is called an $R$-matrix if it satisfies  Yang-Baxter equation
$$
R_{12}R_{13}R_{23}=R_{23}R_{13}R_{12},
$$
where the subscripts indicate the tensor factor in the graded tensor cube of $\End(V)$.
It is usually assumed that $R$ is even. In terms of the matrix $\tilde R\in \End(V\tp V)$, the YBE reads
$$
 \sum_{a,b,c}\tilde R_{ca}^{lm}\tilde R_{ib}^{cn}\tilde R_{jk}^{ab} (-1)^{[a][b]+[n][a]}=
 \sum_{a,b,c}\tilde R_{bc}^{mn}\tilde R_{ak}^{lc}\tilde R_{ij}^{ab}(-1)^{[b][c]+[k][b]}.
$$

An even element $P=\sum_{i=1}^{n}(-1)^{[j]}e_{ij}\tp e_{ji}\in \End(V)\tp \End(V)$ is called graded permutation. It features
$$
P(v\tp w)=(-1)^{\deg(v)\deg(w)}w\tp v
$$
for all homogeneous $v,w\in V$.
Then the operator $S=PR\in \End(V)\tp \End(V)$ satisfies the braid relation
$$
S_{12}S_{23}S_{12}=S_{23}S_{12}S_{23}.
$$
A matrix $A\in \End(V)$ is said to satisfy the (graded) reflection equation if
$$
SA_2S A_2= A_2 S A_2 S
$$
in $\End(V)\tp \End(V)$.
We will generally not assume that $A$ is even.

It is known that graded YBE is equivalent to the non-graded YBE
with the R-matrix with entries $\breve R^{ij}_{kl}=(-1)^{[i][j]+[j][k]+[k][l]}R_{ik,jl   }$. Respectively, the graded braid  equation with matrix $S$ goes over to
the non-graded braid  equation with matrix elements $\breve S^{ij}_{kl}=\breve R^{ji}_{kl}$.
\begin{propn}
The graded RE  on   $A\in \End(V)$  is equivalent to the non-graded RE
\begin{equation}\label{re}
\hat S A_2 \hat S A_2 =A_2 \hat S A_2 \hat S,
\end{equation}
with $\hat S^{ij}_{kl}=\breve S^{ij}_{kl}(-1)^{[i][j]+[k][l]}$, and $\hat S^{ij}_{kl}=\breve S^{ij}_{kl}$ under the assumption that $A$ is  even.
\end{propn}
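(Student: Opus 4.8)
The plan is to transport the graded RE through the isomorphism $\varphi$ of \eqref{iso-phi} and, where necessary, to clean up the outcome by a diagonal gauge transformation. Since $\varphi$ carries the graded multiplication of $\End(V)\tp\End(V)$ to ordinary composition in $\End(V\tp V)$, applying it to $SA_2SA_2=A_2SA_2S$ produces the equivalent identity $\varphi(S)\varphi(A_2)\varphi(S)\varphi(A_2)=\varphi(A_2)\varphi(S)\varphi(A_2)\varphi(S)$, all products now being ordinary. The identification $\varphi(S)=\breve S$ is exactly the braid-equation correspondence recalled just before the statement, so the only genuinely new ingredient is the image $\varphi(A_2)$ of the reflection term.

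First I would compute $\varphi(A_2)$ entrywise. Writing $A_2=1\tp A=\sum_{i,l,k}A_{lk}\,e_{ii}\tp e_{lk}$ and inserting this into \eqref{iso-phi} gives $\varphi(A_2)=\sum_{i,l,k}(-1)^{[i]([l]+[k])}A_{lk}\,e_{il,ik}$, that is, the ordinary operator $1\tp A$ with each entry twisted by the sign $(-1)^{[i]([l]+[k])}$. This single formula drives both cases. If $A$ is even, then $A_{lk}\neq 0$ forces $[l]+[k]\equiv 0\pmod 2$, so every such sign is trivial and $\varphi(A_2)$ is literally the ungraded $1\tp A=A_2$. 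The transported identity is then already \eqref{re} with $\hat S=\varphi(S)=\breve S$, proving the even case.

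For a general $A$ I would remove the signs by conjugating with the diagonal, self-inverse matrix $\Sigma=\sum_{a,b}(-1)^{[a][b]}e_{ab,ab}\in\End(V\tp V)$. A direct check gives $\Sigma e_{il,ik}\Sigma^{-1}=(-1)^{[i]([l]+[k])}e_{il,ik}$, whence $\Sigma\varphi(A_2)\Sigma^{-1}=1\tp A=A_2$ becomes the clean reflection term. As conjugation by $\Sigma$ is an algebra automorphism it preserves the product, so the transported RE is equivalent to $\hat S A_2\hat S A_2=A_2\hat S A_2\hat S$ with $\hat S=\Sigma\breve S\Sigma^{-1}$; and since $\Sigma e_{ij,kl}\Sigma^{-1}=(-1)^{[i][j]+[k][l]}e_{ij,kl}$ one reads off $\hat S^{ij}_{kl}=(-1)^{[i][j]+[k][l]}\breve S^{ij}_{kl}$, the asserted general formula. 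Both $\varphi$ and conjugation by the invertible $\Sigma$ are bijective and multiplicative, so every step is an equivalence and the ``if and only if'' follows in both cases.

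The routine part is the sign bookkeeping inside $\varphi$. The point that really needs care — and is the heart of the argument — is to choose the gauge $\Sigma$ so that it simultaneously trivialises the signs appearing in $\varphi(A_2)$ and produces exactly the advertised twist $(-1)^{[i][j]+[k][l]}$ on $\breve S$, while recognising that for even $A$ no gauge is required at all.
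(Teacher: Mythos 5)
Your proposal is correct and follows essentially the same route as the paper: apply the isomorphism $\varphi$, identify $\varphi(S)=\breve S$, observe that the even case is immediate because the Koszul signs in $\varphi(A_2)$ trivialise, and absorb the residual signs in the general case by the twist $(-1)^{[i][j]+[k][l]}$. Your only departure is cosmetic: where the paper multiplies the component equation by $(-1)^{[i][j]+[m][n]}$, you package the same operation as conjugation by the diagonal involution $\Sigma$, which makes it transparent that one twist simultaneously restores the ordinary $A_2$ and produces the stated $\hat S$.
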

\begin{proof}
  Apply the isomorphism $\varphi$ defined in (\ref{iso-phi}) to the graded RE and get
$$
\tilde S\tilde A_2\tilde S \tilde A_2=\tilde A_2\tilde S \tilde A_2\tilde S.
$$
Observe that $\tilde S=\breve S$. Plugging in the entries of the tensor $\tilde A_2$ expressed through the matrix $A$,   rewrite the equation as
$$
\sum_{a,b,c,d}\tilde S^{ij}_{ab}A_{bc}(-1)^{[a]([b]+[c])}\tilde S^{ac}_{md}(-1)^{[m]([d]+[n])} A_{dn}=\hspace{150pt}
$$
\vspace{-20pt}
$$
\hspace{150pt}
=
\sum_{a,b,c,d}A_{ja}(-1)^{[i]([j]+[a])}\tilde S^{ia}_{bc} A_{cd}(-1)^{[b]([c]+[d])}\tilde S^{bd}_{mn}.
$$
This turns to the non-graded RE with with braid matrix $\tilde S$ if $A$ is even. For arbitrary $A$,
multiplication of the equation by $(-1)^{[i][j]+[m][n]}$ proves the statement in general.
\end{proof}
\section{Classification of solutions}
\label{Sec_Classification}
We fix a graded $R$ matrix of the $GL$-type in  the fundamental vector representation
 $V=\C^N$ with  an arbitrary grading on the the basis elements:
$$
R=\sum_{i,j} e_{ii}\tp e_{jj}q^{(-1)^{[i]}\dt_{ij}}+\omega\sum_{i>j}e_{ij}\tp e_{ji}(-1)^{[j]}\in \End(V)\tp \End(V),
$$
where $\omega$ stands for $=q-q^{-1}$.  This matrix differs from the one in \cite{Zh} in the sign of inequality in the second sum.

In what follows, we will work with the equivalent non-graded R-matrix and use the symbol $\tp$ to denote  the non-graded tensor product.
In order to make double indexing more readable we will use  upper and lower indices. The matrix units $e_{ij}$ will be  replaced with $e^i_j$.

The equivalent non-graded R-matrix is
$$
\breve R=\sum_{i,j} (-1)^{[i][j]}q^{(-1)^{[i]}\dt_{ij}}e^{i}_j\tp e^i_{j}+\omega\sum_{i<j}e^i_j\tp e^j_i.
$$
 It is convenient to represent the corresponding  braid  matrix
in the form
\be{}
\label{S}
\hat S =\breve S &=&\sum_{i,j} (-1)^{[i][j]}q^{(-1)^i\dt_{ij}}e^j_i\tp e^i_j +\omega\sum_{i<j}e^j_j\tp e^i_i\>
\n\\
&=&
\sum_{i,k} s_{ik}e^{ik}_{ik}+\sum_{i\not =j} (-1)^{[i][j]}e^{ij}_{ji},\quad
\mbox{where}\;
s_{ik}=
\left\{
\begin{array}{ll}
\omega,& i <k,\\
(-1)^{i}q^{(-1)^i},& i =k,\\
0,& i >k.
\end{array}
\right.
\n
\ee{}
The transposed matrix $\hat S_{21}$ is equivalent to the graded R-matrix from \cite{Zh}.

We denote  by $[a,b]\subset \Z$ the intervals
$\{k\in \Z| a\leq k \leq b\}$. Respectively we use parentheses for the
intervals defined by strict inequalities.
Classification theorem is formulated with the use of  the following data.
\noindent
\begin{definition}
\label{ap}
{\em An admissible  pair} $(Y,\si)$ consists of an
ordered subset
$Y\subset I=[1,N]$  and a parity preserving strictly decreasing  map
$\si\colon Y \to I$ without stable points.
\end{definition}
\noindent
%The map $\si$ is determined by its image subject to $\si(Y)\cap Y=\varnothing$.
We distinguish two
non-intersecting subsets $Y_+=\{i\in Y| i>\si(i)\}$
and $Y_-=\{i\in Y| i<\si(i)\}$; obviously $Y=Y_-\cup Y_+$. Denote
$b_-=\max\{Y_-\cup \si (Y_+)\}$ and  $b_+=\min\{Y_+\cup \si (Y_-)\}$.
Because the $\si$ is decreasing, we have $b_-<b_+$.
We adopt the convention $b_-=0$ and $b_+=N+1$ if
$Y=\varnothing$.
\begin{thm}
\label{clth}
General  solution to the graded RE of the $GL$-type is a matrix $A$ of  the form
\be{}
\label{gf}
A&=&\sum_{i\in I} x_i e^i_i + \sum_{j\in Y}y_j  e^{\si(j)}_j,
\hspace{28pt}
\ee{}
where
\begin{itemize}
\item $(Y,\si)$ is an  admissible pair with
\be{}
Y&=&[1, b_-]\cup [b_+, b_+ + b_- -1] ,\hspace{20pt}
\\
\si(i)&=&b_+ + b_- -i
\ee{}
for $b_-,b_+\in \N$ subject to the conditions
$b_-<b_+$, $b_- + b_+\leq N+1$ and
\be{}
x_i & = &
\left\{
\begin{array}{ll}
\la+\mu,& i\in [1,b_-],\\
\la,& i\in (b_-,b_+),\\
0,& i\in [b_+,N],
\end{array}
\right.\hspace{35pt}\\
y_i y_{\si(i)} & = &-\la\mu \not =0
\ee{}
for  $\la,\mu \in \C$;
\item  $(Y,\si)$ is an admissible pair such that
 $Y \cap \si(Y)=\varnothing$ and
\be{}
x_i & = &
\left\{
\begin{array}{cc}
\la,& i\in [1,b],\\
0,& i\in (b,N],
\end{array}
\right.\hspace{42pt}\\
y_i &  \not= &0
\ee{}
for $b\in [b_-,b_+)$ and $\la\in \C$.
\end{itemize}
\end{thm}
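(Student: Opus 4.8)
The plan is to invoke the Proposition to replace the graded RE by the single non-graded equation \eqref{re}, to work with the even braid matrix $\hat S=\breve S$ of \eqref{S}, and to verify at the end that every solution is necessarily even, so that by the Proposition the resulting list is exhaustive. Writing $A=\sum_{i,j}A^i_j e^i_j$ and substituting \eqref{S}, equation \eqref{re} expands into a family of scalar identities in the matrix indices of $\End(V)\tp\End(V)$. The decisive structural feature is that $s_{ik}=0$ for $i>k$, so $\hat S$ is triangular apart from the permutation tail $\sum_{i\neq j}(-1)^{[i][j]}e^{ij}_{ji}$; this triangularity, together with the factor $\omega=q-q^{-1}$ that is switched on exactly when two indices are out of order, lets one solve the polynomial system inductively from the extremal indices inward, in the spirit of \cite{M1}.

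The first substantive step is to determine the support of $A$. I would prove that if $A^l_j\neq0$ with $l\neq j$, then $l$ is a function $\si(j)$ of $j$ alone, and that the resulting map $\si$ is strictly decreasing and fixed-point free, so that $(Y,\si)$ is an admissible pair in the sense of Definition \ref{ap} and $A$ has the form \eqref{gf}. Monotonicity and the absence of stable points come from the observation that two off-diagonal entries sharing a row or a column would, through the permutation tail of $\hat S$, generate an inconsistent component of \eqref{re}. Propagating the grading signs $(-1)^{[i][j]}$ through the same components forces $\si$ to preserve parity; consequently each term $e^{\si(j)}_j$ is even and so is $A$, which is the promised evenness.

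With \eqref{gf} established, I would feed it back into \eqref{re} and extract the coefficient relations. The purely diagonal components constrain the sequence $(x_i)$ to a monotone staircase, while the mixed components couple each $y_j$ with $y_{\si(j)}$ and with the jumps of $(x_i)$. The analysis then forces $Y$ into one of two regimes, either $\si(Y)=Y$ with $\si$ an involution, or $Y\cap\si(Y)=\varnothing$, with nothing in between surviving. In the involutive regime the coupling yields the three-step profile $x_i\in\{\la+\mu,\la,0\}$ with cut-offs at $b_-$ and $b_+$ together with $y_jy_{\si(j)}=-\la\mu\neq0$; in the disjoint regime only one of $y_j,y_{\si(j)}$ is present, the quadratic coupling degenerates, and one is left with the two-step profile $x_i\in\{\la,0\}$ with cut-off $b$ and $y_j\neq0$ otherwise free.

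The main obstacle I anticipate is in the support step: upgrading the local constraints to the global statement that the off-diagonal entries assemble into a single contiguous anti-diagonal block $Y=[1,b_-]\cup[b_+,b_++b_--1]$ with $\si(i)=b_++b_--i$. Excluding branched or gapped supports demands a delicate induction on indices in which the grading signs must be tracked at every stage, and it is exactly here that the reasoning of \cite{M1} has to be adapted rather than merely cited, so as to absorb the extra $\Z_2$-signs. I expect this to be where the sequence of nine lemmas that follows concentrates its effort.
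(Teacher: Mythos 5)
Your proposal follows essentially the same route as the paper: pass to the non-graded RE via the Proposition, expand \eqref{re} against the triangular-plus-permutation structure of $\hat S$ into a quadratic system on the $A^i_j$, deduce evenness and the admissible-pair form \eqref{gf} from the components that kill repeated off-diagonal entries in a row or column, and then split the remaining coefficient relations into the involutive and disjoint regimes. One correction to where you locate the difficulty: the contiguous anti-diagonal block $Y=[1,b_-]\cup[b_+,b_++b_--1]$ with $\si(i)=b_++b_--i$ is \emph{not} part of the support step and is not a property of all solutions --- it fails for Type 2, where $Y$ is an arbitrary admissible set with $Y\cap\si(Y)=\varnothing$; in the paper it emerges only in the case $Y_0\neq\varnothing$, late in the argument, from the last family of equations (the analogue of \eqref{eq5}) via the non-vanishing of the products $y_iy_{\si(i)}$ (Lemma \ref{l-eq51}), while the support analysis itself only yields an admissible pair. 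Also, evenness is obtained in the paper immediately, from the coefficients of $e^i_j\tp e^i_j$, rather than verified at the end.
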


As follows from the theorem, there are two classes of
numerical RE matrices, those corresponding to
$\si(Y)=Y$ and $\si(Y)\cap Y=\varnothing$. We call them
solutions of Type 1 and 2, respectively.

Theorem \ref{clth} asserts that solutions of graded RE are exactly even matrices  from \cite{M1} solving the non-graded RE of $\g\l(N)$.
Different gradings impose different restricting conditions on $A$. So, invertible $A$ (they belong to Type 1) are even only for
a "symmetric" grading of  the underlying vector space:
\begin{center}
$
\left(\underbrace{0, \ldots, 0}_\ell ;\underbrace{1,\ldots,1}_{N-2\ell},\underbrace{0, \ldots, 0}_\ell\right)
$
\end{center}
or its opposite.
\section{Proof of the classification theorem}
\label{Sec_Proof}
The proof of Theorem (\ref{clth}) formulated in the previous section
is combinatorial. It is a result of the direct analysis of
equation (\ref{re}) organized into a sequence of lemmas.
Explicitly  (\ref{re}) reads
\be{}
\hat SA_2\hat SA_2&=&
\sum_{i,\bt,\al,\nu} s_{i\bt}s_{i\nu} A^{\nu}_\bt A^\al_{\nu} \; e^i_i\tp e^\bt_\al
+
\sum_{i,j,\bt,\al \atop j\not = i} (-1)^{[i][j]}s_{i\al}A^{j}_\al A^\bt_i   \; e^i_j\tp e^\al_\bt
\label{lhs}\\
&+&
\sum_{i,\bt,\al,\nu \atop i\not = \bt} (-1)^{[i][\bt]} s_{i\nu} A^{\nu}_\bt A^\al_{\nu} \; e^\bt_i\tp e^i_\al
+
\sum_{i,j,\bt,\al \atop {j\not = i \not = \al }}  (-1)^{[i]([\al]+[j])} A^{j}_\al A^\bt_i   \; e^\al_j\tp e^i_\bt
,\n\\
A_2\hat SA_2\hat S&=&
\sum_{i,\bt,\al,\nu} s_{i\al}s_{i\nu} A^{\nu}_\bt A^\al_{\nu} \; e^i_i\tp e^\bt_\al
+
\sum_{i,j,\bt,\al \atop j\not = i} (-1)^{[i][j]} s_{j\bt}A^{j}_\al A^\bt_i   \; e^i_j\tp e^\al_\bt
\label{rhs}\\
&+&
\sum_{i,\bt,\al,\nu \atop i\not = \al}(-1)^{[i][\al]} s_{i\nu} A^{\nu}_\bt A^\al_{\nu} \; e^i_\al\tp e^\bt_i
+
\sum_{i,j,\bt,\al \atop {i\not = j \not = \bt }} (-1)^{[i]([\bt]+[j])}A^{j}_\al A^\bt_i   \; e^i_\bt\tp e^\al_j
\n.
\ee{}
Comparison of similar terms   gives rise to the system
of quadratic equations on the matrix elements $A^i_j$.
\begin{lemma}
A general  solution to (\ref{re}) is an even matrix $A$ satisfying the following system of equations:
\be{}
\left\{
\begin{array}{rrl}
A^m_i A^n_i &=&0,
 \\
A^i_m A^i_n &=&0, %& m\not = n  \not = i \not = m
\end{array}
\right.
\hspace{44pt}
\quad  m\not = n  \not = i \not = m ,
\hspace{20pt}
\label{eq1}
\\
A^n_i A^j_m \hspace{8pt}=\hspace{8pt}0,
\hspace{36pt}
\quad
\left\{
\begin{array}{l}
j\not = m \not = n\not =i,\\
(m-i)(n-j)<0,
\end{array}
\right.
\label{eq2}
\\
\left\{
\begin{array}{rrll}
((-1)^{[i]})q^{(-1)^{[i]})} - s_{mi}) A^i_i A^m_i &=& \sum_{\nu}s_{i\nu} A^{\nu}_i A^m_{\nu},& i \not = m
, \\
((-1)^{[i]})q^{(-1)^{[i]})} - s_{mi}) A^i_i A^i_m &=& \sum_{\nu}s_{i\nu} A^{\nu}_m A^i_{\nu},& i \not = m
, \\
0&=&\sum_{\nu}s_{i\nu} A^{\nu}_m A^n_{\nu} ,&
(m-i)(n-i)<0,
\end{array}
\right.
 \label{eq3}\\
(-1)^{[i][n]}A^n_i A^i_m - \sum_{\nu}s_{i\nu} A^{\nu}_m A^n_{\nu}=
(s_{ni} - s_{im}) A^i_i A^n_m
, \quad  m\not = i  \not = n \not = m
, \hspace{8pt}
\label{eq4}
\\
\omega A^m_m A^i_i
=
\sum_{\nu}s_{i\nu} A^{\nu}_m A^m_{\nu}  -  \sum_{\nu}s_{m\nu}  A^{\nu}_i
A^i_{\nu}
, \quad  i< m.
%\hspace{pt}
\label{eq5}
\ee{}
\end{lemma}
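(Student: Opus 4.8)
The plan is to prove the lemma by brute coefficient comparison. Since both sides of (\ref{re}) are already written out in the monomial basis $\{e^i_j\tp e^\al_\bt\}$ of $\End(V)\tp\End(V)$ in (\ref{lhs}) and (\ref{rhs}), the task reduces to equating, for each quadruple of indices, the coefficient of the corresponding basis vector and checking that the resulting scalar identities are exactly (\ref{eq1})--(\ref{eq5}) together with the evenness of $A$. The organising observation is that each of the four sums on a given side is supported on a different positional pattern of $e^p_q\tp e^r_s$: the first sum needs the first leg diagonal ($p=q$), the third sum needs the contraction pattern (lower index of the first leg equal to the upper index of the second), while the second and fourth sums carry plain monomials in the entries of $A$. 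In particular only the first and third sums carry the internal contractions $\sum_\nu s_{i\nu}(\cdots)$, so these are the sums responsible for (\ref{eq3})--(\ref{eq5}), whereas the monomial sums produce (\ref{eq1}) and (\ref{eq2}).

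First I would read off the evenness of $A$ from one clean family of patterns. For $i\neq j$ the basis vector $e^i_j\tp e^i_j$ receives a contribution only from the second sum on each side (the diagonal, contraction and fourth sums are all switched off), and matching coefficients gives $(s_{ii}-s_{jj})(A^j_i)^2=0$. Since $s_{ii}=(-1)^{[i]}q^{(-1)^{[i]}}$, the factor $s_{ii}-s_{jj}$ is nonzero precisely when $[i]\neq[j]$; hence $A^j_i=0$ whenever the parities of $i$ and $j$ differ, which is exactly the statement that $A$ is even. From this point on every surviving monomial $A^a_b$ has $[a]=[b]$, so all parity signs $(-1)^{[i][j]}$, $(-1)^{[i]([\al]+[j])}$, etc.\ collapse and the remaining comparison becomes grading-independent.

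Next I would treat the off-diagonal patterns, where no contraction occurs and the equations are purely monomial. The repeated-index configurations $e^i_q\tp e^i_s$ (the two upper indices equal) and, symmetrically, $e^p_j\tp e^r_j$ (the two lower indices equal) isolate a single product $A^q_iA^s_i$ (respectively $A^i_mA^i_n$) against a scalar factor that does not vanish for generic $q$, yielding (\ref{eq1}). The fully generic configuration, with $p,q,r,s$ all distinct and both legs off-diagonal, instead relates the two monomials $A^q_rA^s_p$ and $A^q_pA^s_r$ through coefficients built from $s_{pr}$ and $s_{qs}$ and from the residual grading signs; exploiting that $s_{ik}$ vanishes when $i>k$, the crossing condition $(m-i)(n-j)<0$ recorded in (\ref{eq2}) is precisely the regime in which the surviving product is forced to be zero.

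Finally I would extract (\ref{eq3})--(\ref{eq5}) from the patterns in which a first or third sum switches on. The diagonal comparison on $e^i_i\tp e^\bt_\al$ produces the relations carrying $A^i_iA^m_i$ and $A^i_iA^i_m$ on one side and the contractions $\sum_\nu s_{i\nu}(\cdots)$ on the other, i.e.\ the first two lines of (\ref{eq3}), with the sub-pattern $(m-i)(n-i)<0$ giving the third line; the mixed contraction pattern with exactly one coincidence of indices gives (\ref{eq4}); and the purely diagonal comparison, after subtracting the two $\nu$-sums, gives (\ref{eq5}) for $i<m$. I expect the principal obstacle to be organisational rather than conceptual: one must check that the list of coincidence patterns is exhaustive and non-overlapping, and keep every occurrence of $s_{ik}$ and of the grading signs consistent while the contractions $\sum_\nu$ are resolved, since it is precisely the disappearance of these signs after evenness that makes the final system coincide with the ungraded one of \cite{M1}.
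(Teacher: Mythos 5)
Your proposal is correct and follows essentially the same route as the paper: a direct term-by-term comparison of the expansions (\ref{lhs}) and (\ref{rhs}), extracting evenness from the coefficient of $e^i_j\tp e^i_j$ (your identity $(s_{ii}-s_{jj})(A^j_i)^2=0$ is exactly the paper's step) and then reading off (\ref{eq1})--(\ref{eq5}) from the remaining index-coincidence patterns. Your attribution of the first two lines of (\ref{eq3}) to the diagonal patterns $e^i_i\tp e^\bt_\al$ differs superficially from the paper's choice of $e^i_m\tp e^i_n$ and $e^m_j\tp e^n_j$, but the resulting equations are equivalent (via the Hecke identity $(s_{ii}-s_{im})(s_{ii}-s_{mi})=1$), so this is only a bookkeeping variation.
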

Using a chance we point out a typo in $s_{mi}$ (order of indices) in (\ref{eq3}) admitted in \cite{M1}. This presentational flaw did not affect  Lemma 3.3 because it was based on the correct expressions.
\begin{proof}
The proof is done by a direct analysis of which we indicate the key steps.
Comparison of  coefficients before $e_j^i\tp e_j^i$ implies that $A$ is even.
Equations (\ref{eq1}) results from comparing terms with $e^i_m\tp e^i_n$ and $e_i^m\tp e_i^n$.
Equation (\ref{eq2}) with  $i\not =j$ is obtained from $e^i_j\tp e^m_n$. For $i=j$, it coincides with  (\ref{eq4}) modulo
third    line of (\ref{eq3}). The former comes from $e^i_n\tp e_i^m$ and the latter from $e^i_i\tp e_m^n$.
The sign factor in (\ref{eq4}) is reduced from  $(-1)^{[m][i]+[m][n]+[i][n]}$ because $A$ is even.
The 1st line of (\ref{eq3}) comes from $e^i_m\tp e^i_n$ while the 2nd from $e_j^m\tp e_j^n$.
Equation (\ref{eq5}) results from $e_i^m\tp e_m^i$. All other equations arising from (\ref{re}) are fulfilled in virtue of this system.
\end{proof}
The next lemma accounts for equations  (\ref{eq1})
and  (\ref{eq2}).
\begin{lemma}
\label{l-eq1}
If $A$ a solution to equation (\ref{re}), then it
can be presented in the form (\ref{gf}),
%$$A=\sum_{i\in I} x_j e^j_j + \sum_{i\in Y}y_i e^i_{\si(i)},$$
where $(Y,\si)$ is an admissible pair, $x_i=A^i_i$  for $i\in I$,
and  $y_i=A^{\si(i)}_i\not = 0$ for $i\in Y$.
\end{lemma}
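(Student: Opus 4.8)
The plan is to read off the structure of $A$ from the two ``vanishing'' families (\ref{eq1}) and (\ref{eq2}), using in addition the evenness of $A$ that was already established in the previous lemma. First I would record what evenness buys us: since $e^i_j$ lies in the even component exactly when $[i]+[j]\equiv 0\bmod 2$, evenness of $A$ means every nonzero entry satisfies $[i]=[j]$. This observation costs nothing now but will later supply the parity-preserving condition of an admissible pair automatically.

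Next I would interpret (\ref{eq1}) as a constraint on the off-diagonal support of $A$. The first line, $A^m_i A^n_i=0$ for pairwise distinct $m,n,i$, forbids two nonzero off-diagonal entries in a single column; the second line, $A^i_m A^i_n=0$, forbids two in a single row. Hence the off-diagonal support is a partial matching: letting $Y\subset I$ be the set of columns carrying a nonzero off-diagonal entry, there is a well-defined map $\si\colon Y\to I$ sending $j$ to the unique row $\si(j)\neq j$ with $A^{\si(j)}_j\neq 0$, and the row constraint makes $\si$ injective. Putting $x_i=A^i_i$ and $y_j=A^{\si(j)}_j\neq 0$ then yields the form (\ref{gf}) with $\si$ injective and fixed-point free. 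Combining with the first step, $A^{\si(j)}_j\neq 0$ forces $[\si(j)]=[j]$, so $\si$ is parity preserving.

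It remains to prove that $\si$ is strictly decreasing, which is where (\ref{eq2}) enters and where the only genuine care is required. Take $i<m$ in $Y$ and suppose, for contradiction, that $\si(i)<\si(m)$ (equality is excluded by injectivity). Writing $n=\si(i)$ and $j=\si(m)$, the entries $A^n_i=y_i$ and $A^j_m=y_m$ are both nonzero while $(m-i)(n-j)<0$, so (\ref{eq2}) ought to force their product to vanish. The main obstacle is the index-distinctness hypothesis $j\neq m\neq n\neq i$ in (\ref{eq2}): the direct assignment of dummy indices producing $A^n_i\,A^j_m$ is legal only when $m\neq\si(i)$, whereas the reversed assignment (relabel the dummies so the equation reads $A^{\si(m)}_m\,A^{\si(i)}_i$) is legal only when $i\neq\si(m)$. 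I would therefore argue by cases on these two equalities: at least one assignment is always available, since both fail simultaneously only when $m=\si(i)$ and $i=\si(m)$, which would force $\si(i)=m$ and $\si(m)=i$, hence $\si(i)>\si(m)$ for $i<m$, contradicting the assumption $\si(i)<\si(m)$. Thus one application of (\ref{eq2}) gives $y_iy_m=0$, the desired contradiction, and $\si(i)>\si(m)$ follows. Together with injectivity, fixed-point freeness and the parity-preserving property obtained above, this shows $(Y,\si)$ is an admissible pair in the sense of Definition \ref{ap}.
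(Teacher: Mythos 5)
Your proof is correct and follows essentially the same route as the paper: equations (\ref{eq1}) give the at-most-one-off-diagonal-entry-per-row-and-column structure defining $(Y,\si)$, equation (\ref{eq2}) forces $\si$ to be strictly decreasing, and evenness of $A$ gives parity preservation. The only difference is that you spell out the case analysis needed to apply (\ref{eq2}) despite its index-distinctness hypotheses (a detail the paper compresses into ``that is encoded in equation (\ref{eq2})''), and your resolution of that point is sound.
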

\begin{proof}
By virtue of equations (\ref{eq1}), the matrix $A$ has
at most one non-zero off-diagonal entry in
every row and every column.
Indices of such rows  form a subset $Y$ in $I$,
and the  off-diagonal entries may be written as
$y_i=A_i^{\si(i)}\not = 0$,  $i\in Y$, for some
bijection $\si\colon Y\to I$ with no stable points.
This is equivalent to equation (\ref{eq1}). The
map $\si$ is decreasing, that is encoded in equation
(\ref{eq2}). It preserves parity because $A$ is even.
\end{proof}
\begin{lemma}
\label{l-eq31}
The system (\ref{eq3})
is equivalent to
\be{}
\label{eq3'}
\sum_{\nu\geq\max(i,m)}A^\nu_i A^m_\nu=0,\quad i\not = m,
\ee{}
by virtue of (\ref{eq2}). They imply $x_i=0$ for  $i = b_+$.
\end{lemma}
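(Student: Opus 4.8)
The plan is to reduce each of the three lines of (\ref{eq3}) by inserting the explicit values of the matrix $s_{ik}$ and then reading off the truncated sum (\ref{eq3'}). Write $s_{ii}=(-1)^iq^{(-1)^i}$ for the diagonal entry. In the first line the right-hand side $\sum_\nu s_{i\nu}A^\nu_iA^m_\nu$ runs only over $\nu\geq i$ since $s_{i\nu}=0$ for $\nu<i$; separating the $\nu=i$ term $s_{ii}A^i_iA^m_i$ and using $s_{i\nu}=\omega$ for $\nu>i$, the diagonal contribution cancels against the $s_{ii}$ appearing in the coefficient on the left, leaving $-s_{mi}A^i_iA^m_i=\omega\sum_{\nu>i}A^\nu_iA^m_\nu$. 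Substituting $s_{mi}=\omega$ when $m<i$ and $s_{mi}=0$ when $m>i$ and dividing by $\omega\neq0$, this collapses to $\sum_{\nu\geq\max(i,m)}A^\nu_iA^m_\nu=0$. The second line is the transpose of the first and yields the same identity with $i$ and $m$ interchanged, and I would treat the third line (indices $m,n$ on opposite sides of $i$) in the same manner, checking in addition that its boundary term $s_{ii}A^i_mA^n_i$ vanishes.

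The place where (\ref{eq2}) enters is the adjustment of the summation range. When $m>i$ the manipulation above produces $\sum_{\nu>i}A^\nu_iA^m_\nu=0$, whereas (\ref{eq3'}) asks for the sum over $\nu\geq m$, so the plan is to show that each intermediate term with $i<\nu<m$ vanishes. By \lemref{l-eq1} a factor $A^\nu_i$ with $\nu\neq i$ forces $\nu=\si(i)$, and a factor $A^m_\nu$ with $m\neq\nu$ forces $m=\si(\nu)$, whence $i<\si(i)<\si(\si(i))=m$, contradicting the fact that $\si$ is strictly decreasing. The same monotonicity argument disposes of the intermediate terms in the third line and shows its boundary term is zero, so that all three lines of (\ref{eq3}) are equivalent to (\ref{eq3'}). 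I expect this bookkeeping of which products survive — organised entirely by the decreasing map $\si$ — to be the main point of the argument, since naive index matching against (\ref{eq2}) fails on the ``chained'' terms $A^\nu_iA^m_\nu$ and one must instead read the vanishing off the form (\ref{gf}).

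For the final assertion I would specialise (\ref{eq3'}) to $m=\si(i)$ with $i\in Y$. Using the form (\ref{gf}), the full sum $\sum_\nu A^\nu_iA^{\si(i)}_\nu$ receives contributions only from $\nu=i$, giving $x_iy_i$, and from $\nu=\si(i)$, giving $y_ix_{\si(i)}$. Since $i$ and $\si(i)$ are distinct, exactly one of them equals $\max(i,\si(i))$, so the truncated sum in (\ref{eq3'}) retains a single term: for $i\in Y_+$ it is $x_iy_i=0$, and for $i\in Y_-$ it is $y_ix_{\si(i)}=0$. As $y_i\neq0$ by \lemref{l-eq1}, we conclude $x_i=0$ for $i\in Y_+$ and $x_{\si(i)}=0$ for $i\in Y_-$, that is $x_i=0$ on $Y_+\cup\si(Y_-)$. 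In particular $x_{b_+}=0$, since $b_+=\min\bigl(Y_+\cup\si(Y_-)\bigr)$ lies in this set.
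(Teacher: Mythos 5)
Your proposal is correct and follows essentially the same route as the paper: expand each line of (\ref{eq3}) using the explicit values of $s_{i\nu}$, cancel the diagonal $\nu=i$ term against the $s_{ii}$ on the left, truncate the remaining sum to $\nu\geq\max(i,m)$ via (\ref{eq2}) (your monotonicity-of-$\si$ argument is exactly the content of that equation as packaged in Lemma~\ref{l-eq1}), and then specialise to $m=\si(i)$ to get $x_i=0$ on $Y_+\cup\si(Y_-)\ni b_+$. The final step in particular coincides with the paper's argument verbatim.
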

\begin{proof}
The proof is similar to non-graded  case because
subsystem (\ref{eq3}) turns out to be the same: it is equivalent to
$$
\left\{\begin{array}{rrll}
 0&=& \sum_{i<\nu}A^{\nu}_i A^m_{\nu},& m>i,\\
 0&=& \sum_{i\leq \nu} A^{\nu}_i A^m_{\nu},& i >m,\\
 0&=& \sum_{m<\nu}A^{\nu}_i A^m_{\nu},& i>m,\\
 0&=& \sum_{m\leq\nu} A^{\nu}_i A^m_{\nu},& m > i
, \\
0&=&\sum_{k\leq\nu}A^{\nu}_i A^m_{\nu} ,&
(i-k)(m-k)<0,
\end{array}
\right.
$$
Setting $i=j$ in (\ref{eq2}) and substituting it into
the above system   we reduce it
to (\ref{eq3'}).
Setting  $m =\si(i) < i \in Y_+$ in (\ref{eq3'})
leads to $ x_i y_i =0$ and therefore $x_i=0$.
Similarly, the assumption  $m=\si(i)>i \in Y_-$ reduces
(\ref{eq3'}) to $ y_i x_{\si(i)}=0$ and hence $x_{\si(i)}=0$.
So $x_i=0$ for all $i\in Y_+\cup \si(Y_-)$
and $i=b_+$ in particular.
\end{proof}
\begin{lemma}
\label{l-eq4}
With Lemma \ref{l-eq31} taken into account,
equation (\ref{eq4}) is equivalent to the following two assertions.
\begin{enumerate}
\item For any $m\in Y$, either $\si^2(m)=m$ or  $\si(m)\not\in Y$.
\item $x_i = x_{b_-} $ and $x_j = 0$ whenever $i\leq b_-$ and $j\geq b_+ $.
\end{enumerate}
\end{lemma}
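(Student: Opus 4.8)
The plan is to substitute the explicit form (\ref{gf}) of $A$—whose only nonzero entries are the diagonal $x_i=A^i_i$ and the off-diagonal $y_j=A^{\si(j)}_j$ for $j\in Y$—directly into equation (\ref{eq4}) and read off the constraints term by term. Since $A^p_q\neq 0$ forces $q=p$ or ($q\in Y$ and $p=\si(q)$), the product $A^n_i A^i_m$ in the first term is nonzero only when $m,\si(m)\in Y$ with $i=\si(m)$ and $n=\si^2(m)$; the sum $\sum_\nu s_{i\nu}A^\nu_m A^n_\nu$ survives only for $\nu\in\{m,\si(m)\}$; and the right-hand side is nonzero only when $m\in Y$ and $n=\si(m)$. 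I would therefore organise the whole analysis around $m\in Y$ together with the two distinguished values $n=\si(m)$ and $n=\si^2(m)$. When $m\notin Y$, or when $n$ lies outside this set, every term vanishes and (\ref{eq4}) is automatic, so no constraint arises.

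For the chain configuration $n=\si^2(m)$—a legitimate triple of distinct indices precisely when $m,\si(m)\in Y$ and $\si^2(m)\neq m$—I would evaluate (\ref{eq4}) at $i=\si(m)$. The right-hand side drops out, the sum collapses to its $\nu=\si(m)$ term $s_{\si(m)\si(m)}\,y_m y_{\si(m)}$, and the first term equals $(-1)^{[\si(m)][\si^2(m)]}y_{\si(m)}y_m$. Since $y_m,y_{\si(m)}\neq 0$, cancelling them leaves $(-1)^{[\si(m)][\si^2(m)]}=(-1)^{[\si(m)]}q^{(-1)^{[\si(m)]}}$, whose left side is $\pm 1$ while the right side is $\pm q^{\pm 1}$. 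This is impossible for generic $q$, so no such triple can occur; equivalently, for every $m\in Y$ either $\si(m)\notin Y$ or $\si^2(m)=m$, which is exactly assertion (1).

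For $n=\si(m)$ the first term vanishes, and after dividing by $y_m$ the equation reduces to the linear relation $s_{im}x_m+s_{i\si(m)}x_{\si(m)}+(s_{\si(m)i}-s_{im})x_i=0$, required for every $i\neq m,\si(m)$. I would read this against the three-valued table for $s_{ik}$, separating $m\in Y_-$ (so $m<\si(m)$) from $m\in Y_+$ (so $\si(m)<m$) and letting $i$ run below, between, and above the pair $\{m,\si(m)\}$. Using the vanishing $x_{\si(m)}=0$ for $m\in Y_-$ and $x_m=0$ for $m\in Y_+$ already supplied by Lemma \ref{l-eq31}, each range yields one of: $x_i$ equals the value at the lower member of the pair; the (already known) vanishing of $x$ at the higher member; or $x_i=0$ strictly above the higher member. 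Collecting these over all pairs and recalling $b_-=\max\{Y_-\cup\si(Y_+)\}$ and $b_+=\min\{Y_+\cup\si(Y_-)\}$, the pair with lower member $b_-$ forces $x_i=x_{b_-}$ for $i\leq b_-$, while the pair with higher member $b_+$, combined with $x_{b_+}=0$ from Lemma \ref{l-eq31}, forces $x_j=0$ for $j\geq b_+$; no condition is placed on the middle range $(b_-,b_+)$. This is assertion (2).

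For the converse I would verify that (1) and (2) make every triple satisfy (\ref{eq4}): by (1) the chain configuration with $\si^2(m)\neq m$ never arises, so only the $n=\si(m)$ case remains, and there the $x$-values prescribed by (2) satisfy the linear relation in each range exactly as computed above, the middle range being irrelevant. I expect the main obstacle to be bookkeeping rather than any conceptual difficulty: one must carry the graded signs $(-1)^{[\,\cdot\,][\,\cdot\,]}$ correctly through the substitution, keep the three regimes of $s_{ik}$ straight across the many sub-ranges of $i$, and align the lower and higher members of each $\si$-pair with $b_-$ and $b_+$ so that the per-pair conditions assemble into the two clean global statements.
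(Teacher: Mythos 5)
Your proposal is correct and follows essentially the same route as the paper: it reduces (\ref{eq4}) to the two nontrivial configurations $n=\si(m)$ and $n=\si^2(m)$ for $m\in Y$, extracts assertion (1) from the impossibility of $(-1)^{[\si(m)][\si^2(m)]}=s_{\si(m)\si(m)}=\pm q^{\pm1}$ at $i=\si(m)$, and obtains assertion (2) from the same four-range analysis of the linear relation in the $x_i$ combined with the vanishing on $Y_+\cup\si(Y_-)$ from Lemma \ref{l-eq31}. The only cosmetic difference is that the paper treats the case $n\neq\si(m)$ for general $n$ (deriving $A^n_{\si(m)}=0$) rather than isolating $n=\si^2(m)$, but the content is identical.
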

\begin{proof}
First note that if $m\not \in Y$, equation (\ref{eq4})
holds identically.
So we can assume $m \in Y$ and rewrite (\ref{eq4}) as
\be{}
(-1)^{[i][n]}A^n_i A^i_m - s_{im} A^{m}_m A^n_{m}-
s_{i\si(m)} A^{\si(m)}_m A^n_{\si(m)}=
(s_{ni} - s_{im}) A^i_i A^n_m,
\label{eq4'}
\ee{}
where $m\not=i\not=n \not=m$.

Supposing $n\not =\si(m)$ we find
$(-1)^{[i][n]}A^n_i A^i_m - s_{i\si(m)} A^{\si(m)}_m A^n_{\si(m)}= 0.$
If $i=\si(m)$, then, having in mind
$s_{ii}=(-1)^{[i]}q^{(-1)^{[i]}}\not=\pm 1$ and $A^{\si(m)}_m=y_m\not =0$, we obtain
$A^n_{\si(m)}=0$. Since  $n\not =\si(m)=i$ and $n\not =m$,
this means
either ${\si(m)}\not \in Y$ or ${\si^2(m)}=m$. Assuming
$i\not=\si(m)$, we come to the equation
$s_{i\si(m)} A^{\si(m)}_m A^n_{\si(m)}=0$, which is fulfilled
as well, due to $A^n_{\si(m)}=0$.

It remains to study the case  $n =\si(m)$.
Under this hypothesis, the term $A^n_i A^i_m$ vanishes.
Indeed, since $i\not= m$,
one has $A^i_m \not =0 \Rightarrow i= \si(m)$. But this
contradicts the condition $i\not= n =\si(m)$.
In terms of the variables $x_i$ and $y_i$, equation (\ref{eq4'}) then
reads
$$
- s_{im} x_m y_m-s_{i\si(m)} y_m x_{\si(m)}=
(s_{\si(m)i} - s_{im}) x_i y_m,
$$
where $m\not=i\not =\si(m)$.

Depending on allocation of the indices $i$, $m$, and $\si(m)$,
this equation splits into the following four implications.
\be{}
i<m\quad\mbox{and} \quad i<\si(m) &\Longrightarrow&  x_i=x_m+x_{\si(m)}
,\label{eq41}\\
i<m\quad\mbox{and} \quad i>\si(m) &\Longrightarrow&  x_m=0
\label{eq42},\\
i>m\quad\mbox{and} \quad i<\si(m) &\Longrightarrow&  x_{\si(m)}=0
\label{eq43},\\
i>m\quad\mbox{and} \quad i>\si(m) &\Longrightarrow&  x_i=0
\label{eq44}.
\ee{}
Recall that the index $m$ is assumed to be from $Y$.
Equation (\ref{eq44}) is equivalent to $x_i = 0$ for $i>b_+$.
By Lemma \ref{l-eq31}, this is also true for $i\geq b_+$.
Equations (\ref{eq42}) and  (\ref{eq43}) are thus satisfied
as well.
Equation (\ref{eq41}) states
$x_i = x_{b_-}+x_{\si(b_-)}$
if $i<b_- \in Y_-$
or
$x_i = x_{b_-}+x_{\si^{-1}(b_-)}$
if $i<b_-\in \si(Y_+)$.
Applying Lemma \ref{l-eq31}, we find
$x_i = x_{b_-}$, in either cases.
\end{proof}
\begin{remark}
One can see that the proof of this lemma is almost literally the same as in the non-graded case.
\end{remark}
\begin{lemma}
\label{l-eq3}
Equation (\ref{eq3'}) is fulfilled
by virtue of Lemma \ref{l-eq4}.
\end{lemma}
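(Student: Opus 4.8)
The plan is to substitute the explicit form (\ref{gf}) into the left-hand side of (\ref{eq3'}) and to verify that, for every pair $i\neq m$, the conclusions of Lemma \ref{l-eq4} force each surviving summand to vanish. The starting observation is that $A$ is sparse: in column $i$ the entry $A^\nu_i$ is nonzero only for $\nu=i$ (value $x_i$) and, when $i\in Y$, for $\nu=\si(i)$ (value $y_i$); dually, in row $m$ the entry $A^m_\nu$ is nonzero only for $\nu=m$ (value $x_m$) and, when $m\in\si(Y)$, for $\nu=\si^{-1}(m)$ (value $y_{\si^{-1}(m)}$). First I would intersect these two sets of admissible $\nu$ and impose the summation bound $\nu\geq\max(i,m)$; this collapses the sum to at most one of three candidate products.

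Two of these candidates occur only when $m=\si(i)$ with $i\in Y$. The choice $\nu=i$ gives the product $x_i y_i$ and meets the bound $\nu\geq\max(i,m)$ exactly when $i\in Y_+$, while the choice $\nu=\si(i)$ gives $y_i\,x_{\si(i)}$ and meets the bound exactly when $i\in Y_-$. To dispose of both I would invoke part (2) of Lemma \ref{l-eq4}, which asserts $x_j=0$ for $j\geq b_+$; since by the definition of $b_+$ we have $Y_+\cup\si(Y_-)\subseteq[b_+,N]$, the factor $x_i$ vanishes in the first case and $x_{\si(i)}$ vanishes in the second, so each product is zero.

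The third candidate is the purely off-diagonal term $\nu=\si(i)$ with $m=\si^2(i)$, contributing $y_i\,y_{\si(i)}$ and requiring both $i\in Y$ and $\si(i)\in Y$. Here I would apply part (1) of Lemma \ref{l-eq4} to the index $i\in Y$: the hypothesis $\si(i)\in Y$ forces $\si^2(i)=i$, whence $m=i$, contradicting $i\neq m$. Thus this configuration never contributes to (\ref{eq3'}), and since the three candidates exhaust the possibilities, the left-hand side of (\ref{eq3'}) vanishes identically for all $i\neq m$.

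The only genuine obstacle is the bookkeeping: one must correctly couple the bound $\nu\geq\max(i,m)$ to the dichotomy $i\in Y_+$ versus $i\in Y_-$, and recognize that the double-off-diagonal case is precisely the configuration excluded by part (1) of Lemma \ref{l-eq4}. Beyond that the argument is routine substitution, and, as with the preceding lemma, it mirrors the non-graded reasoning of \cite{M1}, the grading entering only through the parity-preserving property of $\si$, which plays no role in the vanishing.
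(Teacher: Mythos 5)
Your argument is correct and is essentially the paper's own proof: both use the sparsity of $A$ in the form (\ref{gf}) to reduce the sum in (\ref{eq3'}) to at most one surviving product and then kill it with Statements 1 and 2 of Lemma \ref{l-eq4}. You organize the cases by candidate product ($x_iy_i$, $y_i x_{\si(i)}$, $y_i y_{\si(i)}$) while the paper splits on the sign of $i-m$, but the content and the invocations of Lemma \ref{l-eq4} are the same.
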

\begin{proof}
Consider the case $m<i$. Equation  (\ref{eq3'}) holds
if $i\not \in Y$, because the sum turns into $A^i_iA^m_i$.
So we may assume $i \in Y$ and distinguish two
cases: $i\in Y_-$ and $i\in Y_+$.
Assumption $i<\si(i)$ leads to
$A^i_i A^m_i + A^{\si(i)}_i A^m_{\si(i)}=0$.
The equality $m={\si(i)}$ is impossible, since otherwise
$m={\si(i)}<i<{\si(i)}$. Therefore the first term
vanishes and we come to $A^m_{\si(i)}=0$, $m\not= i$.
This condition is satisfied, by Lemma  \ref{l-eq4},
Statement 1.
The case  $i\in Y_+$ results in
$x_i A^m_i=0$. Setting $m =\si(i)$, we come to $ x_i =0 $.
Once $i\in Y_+\Rightarrow i\geq b_+$, we
encounter a particular case of Lemma \ref{l-eq4},
Statement 2.

We should study the situation $i<m$. Equation
(\ref{eq3'})  holds if $\si(i)<m$ because
two possibly non-zero terms $A_i^\nu$ with $\nu=i$ and $\nu=\si(i)$ are not in the range of summation $\nu\geqslant m$.

We may think that  $m\leq\si(i)$; then
$$
\begin{array}{llrll}
i<m=\si(i)& \Rightarrow & A^m_m = 0& \Rightarrow & x_{\si(i)}=0,\\
i<m<\si(i)& \Rightarrow &  A^m_{\si(i)} = 0& \Rightarrow &
\si(i)\not \in Y.
\end{array}
$$
These requirements are fulfilled, by Lemma  \ref{l-eq4}.
\end{proof}

It remains to satisfy equation (\ref{eq5}), in order to complete the
proof of Theorem \ref{clth}.
Let $Y_0$ be the subset in $Y$, such that $\si$ restricted to $Y_0$ is
involutive. By Lemma \ref{l-eq4}, either $\si(i) \not \in Y$
or $i$ and $\si(i)$ belong to $Y_0$ simultaneously.
Equation (\ref{eq5}) brakes down  into four equations
\be{}
       x_m(x_i-x_m) & = & 0 , \hspace{134pt}  i\not \in Y_0,\> m\not \in Y_0,
\label{eq5'}\\
\omega x_m(x_i-x_m) & = & s_{i\si(m)} y_m y_{\si(m)},\quad
                  \hspace{66pt} i \not\in Y_0,\> m \in Y_0,
\label{eq5''}\\
\omega x_m(x_i-x_m) & =  &-s_{m\si(i)} y_i y_{\si(i)} ,\quad
                  \hspace{67pt} i \in Y_0,\> m \not \in Y_0,
\label{eq5'''}\\
\omega x_m(x_i-x_m) & =  & s_{i\si(m)} y_m y_{\si(m)}
- s_{m\si(i)} y_i y_{\si(i)} ,\quad
        i \in Y_0,\> m \in Y_0.
\label{eq5''''}
\ee{}
Everywhere $i<m$, see (\ref{eq5}).
\begin{lemma}
\label{l-eq51}
Suppose $Y_0\not=\varnothing$.
Then $Y=Y_0$ and, moreover,
\be{}
Y_-=\{1,\ldots,b_-\} ,\quad Y_+=\{b_+,\ldots,b_+ +b_- -1\},
\label{notempty}
\ee{}
\be{}
\si(i) = b_+ + b_- -i.
\ee{}
\end{lemma}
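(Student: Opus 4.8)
The plan is to extract the content of the four equations (\ref{eq5'})--(\ref{eq5''''}) under the standing hypothesis $Y_0\neq\varnothing$, using throughout the description of the diagonal part from Lemma~\ref{l-eq4}: $x_i=x_{b_-}$ for $i\leq b_-$ and $x_j=0$ for $j\geq b_+$. I would organise the argument in two stages. First I would prove that $Y=Y_0$, i.e. that $\si$ is involutive on all of $Y$; then, knowing that $\si$ is a strictly decreasing involution of $Y$ interchanging $Y_-$ and $Y_+$, I would pin down $Y_-$, $Y_+$ and the explicit formula for $\si$. At the outset note that $\si$ restricts to a decreasing involution of $Y_0$, so $\si(Y_0\cap Y_-)=Y_0\cap Y_+$ and $\si(\max Y_0)=\min Y_0$; moreover $\min Y_0\in Y_-$ and $\max Y_0\in Y_+$, whence $\max Y_0\geq b_+>b_-\geq\min Y_0$. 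These facts are valid before $Y=Y_0$ is known and are the backbone of the sandwiching arguments below.

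The core of the lemma --- and the step I expect to be the main obstacle --- is showing $Y=Y_0$. I would argue by contradiction, taking $i^*\in Y\setminus Y_0$, so that $\si(i^*)\notin Y$ by the first statement of Lemma~\ref{l-eq4}, and splitting on $i^*\in Y_-$ or $i^*\in Y_+$. The idea is to feed an index lying outside $Y_0$ into the mixed equations (\ref{eq5''}) and (\ref{eq5'''}), paired either with some $m\in Y_0\cap Y_+$ (where $x_m=0$) or with $m=i^*$; in each instance the diagonal part of the left-hand side collapses to $0$, leaving a condition $s_{\cdot\,\cdot}=0$, that is, a strict inequality between indices. If $i^*\in Y_-$, equation (\ref{eq5''}) forces $i^*>\si(m)$ for every $m\in Y_0\cap Y_+$, whence $i^*>\max(Y_0\cap Y_-)$; feeding this back into (\ref{eq5'''}) with $m=i^*$ gives $\si(i)<i^*$ for every $i\in Y_0\cap Y_-$, i.e. $\max Y_0<i^*$, which is impossible since $\max Y_0\geq b_+>b_-\geq i^*$. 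If $i^*\in Y_+$, the same two equations trap $\si(i^*)$ strictly on both sides of $\min Y_0$: (\ref{eq5'''}) with $m=i^*$ yields $i^*>\max Y_0$, hence $\si(i^*)<\min Y_0$, while (\ref{eq5''}) with the out-of-$Y$ index $\si(i^*)$ yields $\si(i^*)>\max(Y_0\cap Y_-)\geq\min Y_0$, a contradiction. The delicate point is the bookkeeping of inequalities, arranged so that the vanishing of $s$ on one side is incompatible with its non-vanishing on the other; the sign pattern of $s_{ik}$, which is zero exactly when $i>k$, is what makes the sandwich close.

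Once $Y=Y_0$ is established we have $b_-=\max Y_-$, $b_+=\min Y_+$, $\si(b_-)=b_+$ and $\si(1)=\max Y_+$. I would then rule out gaps in $Y_-$ and $Y_+$ by the same mechanism. For a hypothetical $k\notin Y$ with $k<b_-$, equation (\ref{eq5''}) with $m=b_-$ has left-hand side $0$ because $x_k=x_{b_-}$, while its right-hand side is $s_{kb_+}\,y_{b_-}y_{b_+}=\omega\,y_{b_-}y_{b_+}\neq0$ since $k<b_+$; for a hypothetical $k\notin Y$ with $b_+<k<\max Y_+$, equation (\ref{eq5'''}) with $i=1$, $m=k$ has left-hand side $0$ because $x_k=0$, while its right-hand side is $-\omega\,y_1y_{\max Y_+}\neq0$ since $k<\max Y_+$. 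Hence $Y_-=[1,b_-]$ and $Y_+=[b_+,\max Y_+]$ are intervals. As $\si$ is a bijection $Y_-\to Y_+$, counting gives $\max Y_+-b_++1=b_-$, so $\max Y_+=b_++b_--1$ and $Y_+=[b_+,b_++b_--1]$; finally $\si$, being the unique strictly decreasing bijection between the integer intervals $[1,b_-]$ and $[b_+,b_++b_--1]$, must be $\si(i)=b_++b_--i$. This would close the proof.
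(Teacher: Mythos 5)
Your proof is correct and takes essentially the same route as the paper's: both establish $Y=Y_0$ by feeding indices outside $Y_0$ into (\ref{eq5''}) and (\ref{eq5'''}), using Lemma~\ref{l-eq4} to kill the left-hand sides and the non-vanishing of $y_iy_{\si(i)}$ to force the $s$-factors to vanish, and then rule out gaps in $Y_\pm$ by the same mechanism. Your write-up is somewhat more explicit than the paper's (notably on the interval structure of $Y_-$ and the counting argument pinning down $\si(i)=b_++b_--i$), but the underlying argument is the same.
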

\begin{proof}
Let $k \in Y_0 \cap Y_-$ and  $\si(k)\in Y_0 \cap Y_+$ .
Suppose either $l \in Y_-\backslash Y_0$
or $l<\min(Y_-)$.
The assumption
$l<k$ contradicts  equation (\ref{eq5''})
if one sets $i=l$, $m=k$. The
inequality $k<l$ does not agree with (\ref{eq5'''})
if one sets $i=k$, $m=l$.
In both cases one uses Lemma \ref{l-eq4}, Statement 2, and gets
$0=y_k y_{\si(k)}$; that is impossible since
$y_k\not=0$ for all $k\in Y$, by definition of $Y$.
Therefore $Y_-\subset Y_0$ and
$\min(Y_-)=1$.
Assume now  $l \in Y_+\backslash Y_0$. Then,
$l$ cannot exceed $\si(k)$,
because otherwise
$\si(l) <\si^2(k) = k \Rightarrow\si(l)\in Y_0\Rightarrow l\in Y_0$,
an absurd.
The only possibility is $l<\si(k)$. But this again contradicts
equation  (\ref{eq5'''}) if one sets $i=k$ and $m=l$.

Let us prove that $Y_+$ is the integer interval $[b_+,b_++b_--1]$.
Suppose the opposite, then there are $i\in Y_-$ and $m\not\in  Y_+$ such that $b_+<m<\si(i)\in Y_+$.
The equation (2.6) turns to
$
0=-s_{m\si(i)}y_iy_{\si(i)}=-\omega y_iy_{\si(i)},
$
which is impossible.
\end{proof}
\begin{lemma}
\label{l-eq52}
Suppose $Y_0\not=\varnothing$.
Then, there is $a\in \C$ such that $y_i y_{\si(i)} = a$ for all $i\in Y$.
If $(b_-,b_+)\not=\varnothing$, then   $x_m=\la$ for all $m \in (b_-,b_+)$,
where $\la$ is a solution
to the quadratic equation $\la(\la-x_{b_-})=a$.
\end{lemma}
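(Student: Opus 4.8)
The plan is to read off both assertions directly from the four specializations (\ref{eq5'})--(\ref{eq5''''}) of (\ref{eq5}), using the explicit form of $(Y,\si)$ from \lemref{l-eq51} and the values of the $x_i$ from \lemref{l-eq4}. I abbreviate $p_i=y_iy_{\si(i)}$ and recall that $\si(i)=b_++b_--i$ is an involution, that $Y=Y_0=Y_-\cup Y_+$ with $Y_-=[1,b_-]$ and $Y_+=[b_+,b_++b_--1]$, and that $x_i=x_{b_-}$ for $i\le b_-$ while $x_j=0$ for $j\ge b_+$.

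For the first assertion I would test (\ref{eq5''''}) on pairs $i<m$ lying in $Y_-$. Since then $i,m\le b_-$, we have $x_i=x_m=x_{b_-}$, so the left side $\omega x_m(x_i-x_m)$ vanishes; on the right side $\si(i),\si(m)\ge b_+$, so $i<\si(m)$ and $m<\si(i)$ give $s_{i\si(m)}=s_{m\si(i)}=\omega$, and the equation collapses to $0=\omega(p_m-p_i)$. As $\omega\neq0$ this forces $p$ to be constant on $Y_-$. Because $\si$ is involutive, $p_{\si(i)}=y_{\si(i)}y_i=p_i$, so $p$ takes the same value on $Y_+=\si(Y_-)$ and hence on all of $Y$; call it $a$. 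Every $y_k$ with $k\in Y$ is nonzero, so $a=y_iy_{\si(i)}\neq0$ --- a fact I will need in the second part.

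For the second assertion, fix $m\in(b_-,b_+)$, so $m\notin Y_0$, and pair it with $i=b_-\in Y_0$; since $b_-<m$, equation (\ref{eq5'''}) applies and reads $\omega x_m(x_{b_-}-x_m)=-s_{m\si(b_-)}\,y_{b_-}y_{\si(b_-)}$. Here $\si(b_-)=b_+$, and $m<b_+$ forces $s_{mb_+}=\omega$, while $y_{b_-}y_{\si(b_-)}=a$; dividing by $\omega$ yields $x_m(x_m-x_{b_-})=a$, that is, $x_m$ solves $\la(\la-x_{b_-})=a$. To see that all these values coincide, take any $i<m$ in $(b_-,b_+)$: both lie outside $Y_0$, so (\ref{eq5'}) gives $x_m(x_i-x_m)=0$. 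Since each middle $x_m$ satisfies the quadratic with $a\neq0$, none can vanish, and therefore $x_i=x_m$; thus a single $\la$ works, with $x_m=\la$ throughout $(b_-,b_+)$ and $\la(\la-x_{b_-})=a$.

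The computations themselves are light once the index inequalities are tracked, so the delicate point is conceptual rather than computational: a priori each middle $x_m$ is only constrained to be one of the two roots of a fixed quadratic, and the step that forces these roots to agree rests essentially on $a\neq0$ combined with (\ref{eq5'}). The one place where the grading would intervene, were one to also verify (\ref{eq5''''}) on the mixed pairs $i\in Y_-$, $m\in Y_+$, is the equality of the diagonal coefficients $s_{ii}$ and $s_{\si(i)\si(i)}$, which holds precisely because $\si$ preserves parity, i.e. $[\si(i)]=[i]$.
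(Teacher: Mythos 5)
Your proof is correct and takes essentially the same route as the paper's: equation (\ref{eq5''''}) yields constancy of $y_iy_{\si(i)}$, equation (\ref{eq5'''}) with $i=b_-$ yields the quadratic $x_m(x_m-x_{b_-})=a$, and equation (\ref{eq5'}) forces the $x_m$ to agree on $(b_-,b_+)$. You are in fact a little more careful than the paper at the last step, where the text merely asserts that (\ref{eq5'}) is ``equivalent'' to $x_m=x_i=\la$; your observation that $a=y_iy_{\si(i)}\neq 0$ rules out vanishing of any middle $x_m$ and thereby upgrades $x_m(x_i-x_m)=0$ to $x_i=x_m$ is exactly the justification that is left implicit there.
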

\begin{proof}
By Lemmas \ref{l-eq4} and \ref{l-eq51},
equation (\ref{eq5''''}) is fulfilled if and only if
the product $y_i y_{\si(i)}$ does not depend on $i\in Y$
and is equal to some $a \in \C$.
If  $(b_-,b_+)\not=\varnothing$, equation  (\ref{eq5'''})
suggests $x_m(x_m-x_{b_-})=a$ as soon as $m\in (b_-,b_+)$.
It is easy to see that equation  (\ref{eq5'}) is equivalent
to $x_m=x_i=\la$
for $i,m\in (b_-,b_+)$ and some $\la\in \C$.
\end{proof}
It is convenient to introduce the parameterization  $x_{b_-}=\la+\mu$,
$a=-\mu\la$.
Then $x_m = \la + \mu$ for $m\leq b_-$,
$x_m= \la$ for  $b_-<m<b_+$, and $y_i y_{\si(i)}=-\la\mu$.
This parameterization makes sense even if  $(b_-,b_+)=\varnothing$.
The scalars $\la$ and $\mu$ will have  the meaning of
eigenvalues of the matrix $A$.
\begin{lemma}
Let $A$ be a matrix satisfying equations (\ref{eq1})-(\ref{eq4})
and $(Y,\si)$ the corresponding admissible pair.
Then, equation (\ref{eq5}) gives rise to the
alternative
\begin{itemize}
\item
$Y_0=Y$. $A$ is a solution of Type 1 from  Theorem \ref{clth}.
\item
$Y_0=\varnothing$.
$A$ is a solution of Type 2 from Theorem \ref{clth}.
\end{itemize}
\end{lemma}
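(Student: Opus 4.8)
The plan is to cash in the rigid dichotomy already forced by the preceding lemmas. First I would record that, by Lemma \ref{l-eq51}, the hypothesis $Y_0\neq\varnothing$ immediately yields $Y=Y_0$; hence there is no intermediate possibility and one is always in exactly one of the two cases $Y_0=\varnothing$ or $Y_0=Y$. In each case the task reduces to collecting the structural data accumulated so far and matching it against the two types of Theorem \ref{clth}, so no new computation with (\ref{eq5}) is really needed beyond what Lemmas \ref{l-eq51} and \ref{l-eq52} already extracted.

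In the case $Y_0=Y$ I would assemble the Type~1 datum piece by piece. Lemma \ref{l-eq51} supplies the admissible pair, $Y_-=[1,b_-]$, $Y_+=[b_+,b_++b_--1]$ and $\si(i)=b_++b_--i$, so that $Y=[1,b_-]\cup[b_+,b_++b_--1]$; the inclusion $Y_+\subset[1,N]$ gives $b_-+b_+\leq N+1$, while $b_-<b_+$ is known from the moment $b_\pm$ were defined. The diagonal is read off from Lemma \ref{l-eq4}, Statement~2, together with Lemma \ref{l-eq52} and the parameterization $x_{b_-}=\la+\mu$, $a=-\la\mu$, giving $x_i=\la+\mu$ on $[1,b_-]$, $x_i=\la$ on $(b_-,b_+)$ and $x_i=0$ on $[b_+,N]$. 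Finally $y_iy_{\si(i)}=a=-\la\mu$ for every $i\in Y$ by Lemma \ref{l-eq52}, and this product is nonzero since each $y_i\neq0$ by the construction of $Y$ in Lemma \ref{l-eq1}. This is precisely Type~1.

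In the case $Y_0=\varnothing$ I would proceed as follows. By Lemma \ref{l-eq4}, Statement~1, every $m\in Y$ now has $\si(m)\notin Y$, i.e. $Y\cap\si(Y)=\varnothing$, which is exactly the admissibility condition of Type~2. Since the memberships $i\in Y_0$ and $m\in Y_0$ are vacuous, equations (\ref{eq5''})--(\ref{eq5''''}) drop out and (\ref{eq5}) collapses to (\ref{eq5'}) alone, namely $x_m(x_i-x_m)=0$ for all $i<m$. Reading this scalar relation, whenever $x_m\neq0$ each earlier entry satisfies $x_i=x_m$; thus the support $\{i:x_i\neq0\}$ is downward closed, hence an initial interval $[1,b]$ on which $x$ takes a single value $\la$, with $x_i=0$ for $i>b$. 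To place $b$ I would invoke Lemma \ref{l-eq4}, Statement~2: $x_j=0$ for $j\geq b_+$ forces $b<b_+$, and $x_i=x_{b_-}=\la$ for $i\leq b_-$ forces $b\geq b_-$ when $\la\neq0$; in the degenerate configuration $\la=0$ the whole diagonal vanishes and one simply takes $b=b_-$. Hence $b\in[b_-,b_+)$ and $A$ is of Type~2.

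The step I expect to be the main obstacle is the bookkeeping in the second case: turning the single relation $x_m(x_i-x_m)=0$ into the clean \emph{initial-segment-with-constant-value} description, and in particular handling the borderline $\la=0$ configuration so that the transition index $b$ can still be placed in the prescribed range $[b_-,b_+)$ consistently with the free-parameter formulation of Type~2.
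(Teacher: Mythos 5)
Your proposal is correct and follows essentially the same route as the paper: the $Y_0=\varnothing$ case reduces (\ref{eq5}) to (\ref{eq5'}), which forces the diagonal to be constant on an initial segment and zero afterwards, placed in $[b_-,b_+)$ via Lemma \ref{l-eq4}, Statement 2; the $Y_0\not=\varnothing$ case assembles Type 1 from Lemmas \ref{l-eq51} and \ref{l-eq52}. You merely spell out in more detail the bookkeeping that the paper compresses into a few lines (including the converse that the collected conditions ensure (\ref{eq5'})--(\ref{eq5''''}) and hence (\ref{eq5})).
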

\begin{proof}
If $Y_0=\varnothing$, then $Y\cap \si(Y)=\varnothing$ by
Lemma \ref{l-eq4}, Statement 1. Equation
(\ref{eq5}) is reduced to (\ref{eq5'}). It is satisfied if and only if
$x_m=\la$, $i\leq b$, and $x_m=0$, $b< i$, for
some $\la\in \C$ and  $b\in \Z$. Comparing this
with Lemma \ref{l-eq4}, Statement 2,
we come to a solution of Type 2 with  $b\in [b_-,b_+)$.

Suppose $Y_0\not= \varnothing$. Then,
equation (\ref{eq5}) implies Lemmas \ref{l-eq51}
and  \ref{l-eq52}. Conversely, these lemmas
ensure (\ref{eq5'})--(\ref{eq5''''}), which are
equivalent to  (\ref{eq5}). Altogether, this
gives a solution of Type 1.
\end{proof}
Thus we complete the proof of Theorem \ref{clth}

\section{Baxterization}
\label{Sec_Baxt}
In the previous sections, we dealt with so-called constant RE. Its version with spectral parameter
important for applications was  addressed in a number of papers,  e.g. \cite{DK,L}.
Note that consideration was restricted to the following two gradings (standard and symmetric) of the underlying vector space $\C^{m+n}$:
$$
[i]=
\left \{
\begin{array}{ccc}
  0, & i\leqslant m\\
  1, & m<i\leqslant m+n
\end{array}
\right.
,
\quad
[i]=
\left \{
\begin{array}{cccc}
  0, & i\leqslant k &\mbox{or} & k+n<i\\
  1, & k<i\leqslant k+n
\end{array}
\right.,
\quad
\mbox{with}
\quad
m=2k.
$$
The full list of solutions in the standard grading is given \cite{L}.

There is still a problem of clarifying the structure of  solutions with spectral parameter in an arbitrary grading.
In the non-graded case,  every  solution with spectral parameter is obtained from constant  via a
baxterization procedure \cite{KM,IO} up to equivalence, see \cite{RV1}. This equivalence is given by
a family of transformations described in \cite{RV1}, Lemma 3.3.  There is a question  if a similar fact holds true
in an arbitrary grading.
Then the list of constant solutions given in this paper lays a base for such a  classification.
The second step (baxterization) is discussed next.

Recall that a spectral dependent version of the YBE is
$$
S_{23}(x)S_{12}(xy)S_{23}(y) = S_{12}(y)S_{23}(xy)S_{12}(x), \quad x,y\in \C^\times,
$$
where $S$ is regarded as a function $\C\to \End(V)\tp \End(V)$ of (non-zero) spectral parameter.
It is clearly equivalent to a non-graded YBE
\be{}
\label{YBEspec}
\breve{S}_{23}(x)\breve{S}_{12}(xy)\breve{S}_{23}(y) = \breve{S}_{12}(y)\breve{S}_{23}(xy)\breve{S}_{12}(x).
\ee{}
In the $GL$-case, the constant braid matrix $\breve{S}$ satisfies the Hecke condition
$$
\breve{S}^2=\omega \breve{S} +1.
$$
Then $\breve{S}(x)=\breve{S}-x^{-1}\breve{S}^{-1}$ solves  (\ref{YBEspec}), cf. e.g. \cite{Is} and  \cite{M3}.
A baxterization procedure of \cite{KM} makes a constant  RE-matrix $A$ a solution to RE with spectral parameter:
$$
\breve{S}\left(x/y\right)A_2(x)\breve{S}(xy)A_2(y) = A_2(y)\breve{S}(xy)A_2(x)\breve{S}\left (x/y\right).
$$
The minimal polynomial of the  matrix $A$ from Theorem \ref{clth}
is either quadratic or cubic:
$$
(A-\la)(A-\mu), \quad A(A-\la)(A-\mu).
$$
A matrix with cubic minimal polynomial is of Type 1.
Then the  $A(x)$ is, respectively,  either
$$
A(x)=A-\frac{x^{-1}(\xi x-\la-\mu)}{x-x^{-1}}
$$
or
$$
A(x)=A^2+(\xi x-\la-\mu)A-\frac{x^{-1}(x^2\xi^2-\xi x(\la+\mu)+\la\mu)}{x-x^{-1}},
$$
where $\xi$ is an arbitrary parameter.

A universal baxterization formula independent of the minimal polynomial of $A$ can be found in \cite{IO}. It was used for Hecke algebraic formulation of open spin chains
in \cite{Is1}.
In that setting, the matrix $A$ is replaced with the so-called universal matrix $\Kc$ of the quantum general linear supergroup, $U_q(\g)$.
The Hecke algebra admits a representation in $V^{\tp n}\tp \Ac$, where $V=\C^N$ is the natural $U_q(\g)$-module and $\Ac$ a certain $U_q(\g)$-algebra.
This representation takes $\Kc$ to a matrix $K$ whose entries generate $\Ac$.
The assignment $K\mapsto A$ defines a one-dimensional representation of $\Ac$.

The algebra $\Ac$ can be realized as an invariant subalgebra in $U_q(\g)$ (under the adjoint action) generated by the entries of $K=L_-^{-1}L_+$, where
the quantum Lax operators $L_\pm \subset \End(V)\tp U_q(\g)$ are expressed through the universal R-matrix $\Ru$ as $L_+=(\pi\tp \id)(\Ru)$ as $L_-=(\pi\tp \id)(\Ru_{21}^{-1})$.
This factorization of $K$ implies that non-trivial one-dimensional representations of $\Ac$ cannot be restricted
from $U_q(\g)$. A one-dimensional  representation of $U_q(\g)$ assigns $\pm c\in \C$  with an arbitrary sign to natural generators  of the Cartan subalgebra (diagonal entries of $L^\pm$) and kills the root vector generators.
It takes $L_-^{-1}L_+$ to a scalar matrix $A$,
as the only non-vanishing diagonal entries in $L_\pm^{\pm 1}$  produce $c^2$ when squared.
In other words, non-scalar solutions to  RE cannot be obtained from one-dimensional representations of $U_q(\g)$.
This of course applies to the standard quantum supergroup.

In the non-graded case, semi-simple invertible constant solutions to RE classify quantum symmetric pairs via 
a so-called "sandwich" construction introduced in \cite{NDS} and systematically studied in \cite{KS}.
Entries of the matrix $L_-^{-1}AL_+$  generate a left coideal subalgebra $\Bc\subset U_q(\g)$ that  centralizes $A$. 
The dual sandwich $T^{-1}AT$, where $T$ is the RTT-matrix of coordinate functions on the quantum group, satisfies
RE. Its entries generate the subalgebra of $\Bc$-invariants in the Hopf dual to $U_q(\g)$.

\vspace{20pt}.

\noindent
\underline{\large \bf Acknowledgement.}

\vspace{10pt}
\noindent
This work is done at the Center of Pure Mathematics, MIPT, with
financial support of the project FSMG-2023-0013.

The first author (D.A.) is  thankful to the Deanship of Scientific Research at University of Bisha for the financial support through the Scholarship Program of the University.

\section*{Declarations}
\subsubsection*{Data Availability}
 Data sharing not applicable to this article as no datasets were generated or analysed during the current study.

\subsubsection*{Competing interests}
The authors have no competing interests to declare that are relevant to the content of this article.

\end{document}